\newtheorem{theorem}{Theorem}
\newtheorem{prop}[theorem]{Proposition}
\newtheorem{lemma}[theorem]{Lemma}
\newtheorem{rem}[theorem]{Remark}
\newtheorem{exmp}[theorem]{Example}
\tikzset{sgplattice/.style={inner sep=1pt,norm/.style={red!50!blue},char/.style={blue!50!black}, lin/.style={black!50}},cnj/.style={black!50,yshift=-2.5pt,left=-1pt of #1,scale=0.5,fill=white}}
\begin{document}
\title[$(2^{k-1}-1)$-pyramidal symmetric $(2^k-1,2^{k-1},2^{k-2})$-designs]
{Symmetric $(2^k-1,2^{k-1},2^{k-2})$-designs which are $(2^{k-1}-1)$-pyramidal over abelian groups}
\author{Mark Pankov}
\keywords{symmetric design, group action, abelian group, point-line geometry}
\subjclass[2020]{51E20, 51E21}
\address{Faculty of Mathematics and Computer Science, 
University of Warmia and Mazury, S{\l}oneczna 54, 10-710 Olsztyn, Poland}
\email{pankov@matman.uwm.edu.pl}

\begin{abstract}
A design is  called $t$-pyramidal when it has an automorphism group which fixes $t$
points and acts sharply transitively on the remaining points. 
We determine all symmetric $(2^k-1,2^{k-1},2^{k-2})$-designs which are $(2^{k-1}-1)$-pyramidal over abelian groups. 
\end{abstract}

\maketitle

\section{Introduction}
Following \cite{BRT} we say that a block design is $t$-{\it pyramidal} 
if it has an automorphism group which leaves $t$ points fixed and acts sharply transitively on the remaining points.
A description of pyramidal  Steiner triple systems  over abelian groups and a list of references for the topic 
can be found  in \cite{CTZ}.

We consider a similar problem for symmetric $(2^k-1,2^{k-1},2^{k-2})$-designs, i.e.
the complements of Hadamard $(4m-1,2m-1, m-1)$-designs with $m=2^{k-2}$.
In this class, we determine all $(2^{k-1}-1)$-pyramidal designs  over abelian groups. 

%All such designs will be determined under the assumption that they are $(2^{k-1}-1)$-pyramidal over abelian groups. 

%One of simplest abelian groups is $C^k_2$ generated by $k$ mutually commuting involutions.
%t contains precisely $2^k-1$ non-identity elements which are involutions 
%and acts on ${\rm PG}(k-1,2)$ as the group of all reflections across hyperplanes. 
%We determine all symmetric $(2^k-1,2^{k-1},2^{k-2})$-designs which are $(2^{k-1}-1)$-pyramidal over $C^{k-1}_2$.
%Every such design can be presented as the sum of a symmetric $(2^{k-1}-1,2^{k-2},2^{k-3})$-design
%and the design of points and hyperplane complements of ${\rm PG}(k-2,2)$.

The point-line geometry  formed by all points of ${\rm PG}(2^k-1,2)$ with Hamming weight $2^{k-1}$ is investigated in \cite{PPZ1}.
Maximal singular subspaces of this geometry correspond to binary simplex codes of dimension $k$ 
and $(2^k-1)$-element cliques of the collinearity graph are the sets of blocks of symmetric $(2^k-1,2^{k-1},2^{k-2})$-designs.
Maximal singular subspaces are simplest examples of such cliques and the corresponding designs are isomorphic to 
the design of points and hyperplane complements of ${\rm PG}(k-1,2)$.

If $O$ is a {\it center block} of a symmetric $(2^k-1,2^{k-1},2^{k-2})$-design ${\mathcal D}$,
i.e. for every block $B$ distinct from $O$ the symmetric difference $O\triangle B$ is a block,
then ${\mathcal D}$ can be decomposed in the sum of symmetric $(2^{k-1}-1,2^{k-2},2^{k-3})$-designs
${\mathcal D}_O$ and ${\mathcal D}_Z$ whose sets of points  are the complement of $O$ and a $(2^{k-1}-1)$-element subset $Z\subset O$,
respectively.  This decomposition is defined for any such $Z$ and the first component ${\mathcal D}_O$ does not depend on $Z$.
If ${\mathcal D}_Z$ is isomorphic to the design of points and hyperplane complements of ${\rm PG}(k-2,2)$ for a certain $Z$,
then the same holds for all others
and we say that ${\mathcal D}$ is the sum of ${\mathcal D}_O$ and the design of points and hyperplane complements of ${\rm PG}(k-2,2)$.
In this case, there is an abelian group of automorphisms of ${\mathcal D}$ generated by $k-1$ involutions
which acts sharply transitively on the center block $O$ and leaves the remaining points fixed, i.e.
${\mathcal D}$ is $(2^{k-1}-1)$-pyramidal over $C^{k-1}_2$ (the direct product of $k-1$ exemplars of the cyclic group $C_2$).

We show that there exist no other symmetric $(2^k-1,2^{k-1},2^{k-2})$-designs which are $(2^{k-1}-1)$-pyramidal over abelian groups.

\section{Point-line geometries and symmetric block designs}
Recall that a {\it point-line geometry} is a non-empty set, whose elements are called {\it points}, together with a distinguished family of subsets called {\it lines}. 
Every line contains at least two distinct  points and the intersection of two distinct lines contains at most one point. 
We say that two distinct points are {\it collinear} if there is a line containing them.
The associated {\it collinearity graph} is the simple graph whose vertices are points
and two vertices are connected by an edge if they are collinear.
{\it Isomorphisms} of point-line geometries are line preserving bijections between the point sets
and an {\it automorphism} of a geometry is an  isomorphism to itself.

Every non-zero binary vector $(x_1,\dots,x_n)$ can be identified with the subset of  $\{1,\dots,n\}$ 
formed by all indices $i$ such that $x_i\ne 0$.
Therefore, we can consider the projective space ${\rm PG}(n-1,2)$  as the set of all non-empty subsets of an $n$-element set $P$
whose lines are triples of type $X,Y,X\triangle Y$, where $X\triangle Y$ is the symmetric difference of $X,Y$.
For every positive integer $m$ satisfying $3m\le n$ we denote by ${\mathcal P}_m(n)$ the subgeometry of ${\rm PG}(n-1,2)$
whose points are $2m$-element subsets of $P$ (points of Hamming weight $2m$)
and whose lines are precisely the lines of ${\rm PG}(n-1,2)$ contained in ${\mathcal P}_m(n)$.
If $3m>n$, then the set of all $2m$-element subsets of $P$ contains no line;
the same holds for any subset of ${\rm PG}(n-1,2)$ formed by all points of fixed odd Hamming weight.
Every permutation on $P$ induces an automorphism of the geometry ${\mathcal P}_m(n)$.
All automorphisms of this geometry are determined in \cite{PPZ1}. 
In some cases, there are automorphisms which are not induced by permutations.
Two $2m$-element subsets $X,Y\subset P$ are collinear points of ${\mathcal P}_m(n)$
if and only if $X\cap Y$ contains precisely $m$ elements.
By Fisher's inequality \cite{MM}, a clique of  the collinearity graph of ${\mathcal P}_m(n)$ contains at most $n$ vertices.
It was observed in \cite{PPZ1,PPZ2} that every $n$-element clique of this graph  is the set of blocks of a symmetric $(n,2m,m)$-design.

A {\it symmetric $(v,k,\lambda)$-design} can be defined as a pair ${\mathcal D}=(P, {\mathcal B})$, 
where $P$ is a set consisting of $v$ elements called {\it points}
and ${\mathcal B}$ is a $v$-element set of $k$-element subsets of $P$ called {\it blocks}
such that the intersection of any two distinct blocks contains precisely $\lambda$ points
(this is equivalent to the standard definition, see, for example, \cite[p. 299]{LM}). 
Then every point is contained in precisely $k$ distinct blocks
and every $2$-element subset of $P$ is contained in precisely $\lambda$ blocks. 
The pair  ${\mathcal D}^*=({\mathcal B}, \{{\mathcal B}_p\}_{p\in P})$, where ${\mathcal B}_p$ is the set of blocks containing $p$, 
is also a symmetric $(v,k,\lambda)$-design called {\it dual} to ${\mathcal D}$.  
{\it Design isomorphisms} are block preserving bijections between the point sets of designs and an {\it automorphism} of a design is an isomorphism to itself. 
A design is said to be {\it self-dual} if it is isomorphic to the dual design. 

So, ${\mathcal D}=(P, {\mathcal B})$, where $P$ is an $n$-element set and ${\mathcal B}$ is a set of subsets of  $P$, is a symmetric $(n,2m,m)$-design if and only if   
${\mathcal B}$ is an $n$-element clique of the collinearity graph of ${\mathcal P}_m(n)$.
We restrict ourself to the case when 
$$m=2^{k-2},\;\;n=4m-1=2^k-1$$
with $k\ge 3$.
Recall that ${\rm PG}(k-1,2)$ contains precisely  $2^k-1$ points and  the same number of hyperplanes. 

A {\it singular subspace} of a point-line geometry is a subset of the set of points, where any two distinct points are collinear 
and the line joining them is contained in this subset. 
By \cite{PPZ1}, every maximal  singular subspace of ${\mathcal P}_{2^{k-2}}(2^k-1)$ is isomorphic to ${\rm PG}(k-1,2)$
(the case $k=2$ is trivial: ${\mathcal P}_1(3)$ is a line consisting of three points) and, consequently,
it is a  $(2^k-1)$-element clique of the collinearity graph of ${\mathcal P}_{2^{k-2}}(2^k-1)$.
Every symmetric $(2^k-1,2^{k-1},2^{k-2})$-design whose blocks form a maximal singular subspace of ${\mathcal P}_{2^{k-2}}(2^k-1)$ is  isomorphic
to the design of points and hyperplane complements of ${\rm PG}(k-1,2)$ \cite[Proposition 1]{PPZ2}.

In the case when $k\in \{2,3\}$, 
every $(2^k-1)$-element clique of the collinearity graph of ${\mathcal P}_{2^{k-2}}(2^k-1)$ is a maximal singular subspace 
and every symmetric $(2^k-1,2^{k-1},2^{k-2})$-design is isomorphic to the design of points and hyperplane complements of ${\rm PG}(k-1,2)$
(for $k\in \{2,3\}$ hyperplanes of ${\rm PG}(k-1,2)$ are points and lines, respectively).

The design of points and hyperplane complements of ${\rm PG}(k-1,2)$ is self-dual:
every isomorphism between ${\rm PG}(k-1,2)$ and the dual projective space (whose points are hyperplanes) induces an isomorphism between this design and its dual.

\begin{prop}\label{prop-ps}
A symmetric $(2^k-1,2^{k-1},2^{k-2})$-design is isomorphic to the design of points and hyperplane complements of ${\rm PG}(k-1,2)$
if and only if for any distinct points $p,q$ there is a point $t$ such that there is no block containing $p,q,t$.
\end{prop}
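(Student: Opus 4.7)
The forward implication is a quick computation in the hyperplane complement design: identifying points with $\mathbb{F}_2^k \setminus \{0\}$ and blocks with $B_\phi := \phi^{-1}(1)$ for nonzero linear $\phi$, the point $t = p + q$ works, since $\phi(p) = \phi(q) = 1$ forces $\phi(t) = 0$, so no $B_\phi$ contains all of $\{p,q,t\}$.

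For the converse, assume the hypothesis and write $L(p, q)$ for the set of forbidden third points of a pair $\{p, q\}$. The first step is to show that each $t \in L(p,q)$ makes $\{p, q, t\}$ line-like: every block meets $\{p, q, t\}$ in $0$ or $2$ points. This follows by counting blocks through $p$ according to their intersection with $\{q, t\}$: using $2^{k-1}$ blocks through $p$ and $\lambda = 2^{k-2}$ through each pair, no block through $p$ can avoid both $q$ and $t$, and by symmetry of the triple the same holds through $q$ and through $t$. Uniqueness $L(p, q) = \{p * q\}$ then follows: if $t_1 \ne t_2$ both lay in $L(p, q)$, the line conditions for $\{p, q, t_1\}$ and $\{p, q, t_2\}$ would force both $t_1, t_2$ to lie in every block through $p$ but not $q$, and symmetrically in every block through $q$ but not $p$, producing at least $2 \cdot 2^{k-2}$ blocks through $\{t_1, t_2\}$, contradicting $\lambda = 2^{k-2}$.

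The heart of the proof is closure of ${\mathcal B}$ under symmetric difference. For each $p$, the involution $\mu_p \colon q \mapsto p * q$ on $P \setminus \{p\}$ satisfies, by the line condition: (i) if $p \in B$ for a block $B$, then $\mu_p$ bijects $B \setminus \{p\}$ with $P \setminus B$; (ii) if $p \in B_1 \cap B_2$ for two blocks, then $\mu_p$ bijects $(B_1 \cap B_2) \setminus \{p\}$ with $P \setminus (B_1 \cup B_2)$. For any third block $B$ containing some $p \in B_1 \cap B_2$, combining (i) and (ii) gives a bijection between $(B_1 \cap B_2 \cap B) \setminus \{p\}$ and $P \setminus (B_1 \cup B_2 \cup B)$, and inclusion--exclusion then forces $|B_1 \cap B_2 \cap B| = 2^{k-3}$. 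Hence every triple intersection is either $0$ or $2^{k-3}$; a double count $\sum_{B \ne B_1, B_2} |B_1 \cap B_2 \cap B| = \lambda(2^{k-1} - 2)$ pins down that exactly one block $B_3$ has triple intersection $0$, and the identity $|B_3 \cap (B_1 \triangle B_2)| = 2\lambda - 2|B_1 \cap B_2 \cap B_3| = 2^{k-1}$ yields $B_3 = B_1 \triangle B_2$.

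Closure under $\triangle$ makes $U := \{v_B : B \in {\mathcal B}\} \cup \{0\}$ a $k$-dimensional $\mathbb{F}_2$-subspace of $\mathbb{F}_2^P$. The evaluation map $\pi \colon p \mapsto (v \mapsto v(p))$ sends $P$ injectively into $U^* \setminus \{0\}$ (distinct points share only $\lambda = 2^{k-2}$ blocks, not all $2^{k-1}$), hence bijectively onto it by cardinality, and each block $B_v = v^{-1}(1)$ becomes $\{\phi \in U^* \setminus \{0\} : \phi(v) = 1\}$, the complement of a hyperplane in ${\rm PG}(U^*) \cong {\rm PG}(k-1, 2)$. I expect the main obstacle to be the triple-intersection step, where the hypothesis must be converted, via the involutions $\mu_p$, into the rigid constraint $|B_1 \cap B_2 \cap B| \in \{0, 2^{k-3}\}$ that pins down $B_1 \triangle B_2$ as a genuine block.
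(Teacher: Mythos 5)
Your proof is correct, but it takes a genuinely different route from the paper's. The paper dualizes: it rewrites the condition as ${\mathcal B}_p\cap{\mathcal B}_q\cap{\mathcal B}_t=\emptyset$, observes that for three $2^{k-1}$-sets pairwise meeting in $2^{k-2}$-sets this forces ${\mathcal B}_t={\mathcal B}_p\triangle{\mathcal B}_q$, concludes that the blocks of ${\mathcal D}^*$ form a maximal singular subspace of ${\mathcal P}_{2^{k-2}}(2^k-1)$, and then invokes \cite[Proposition 1]{PPZ2} together with self-duality of the hyperplane-complement design. You instead stay entirely on the primal side: you derive the line structure on points (existence and uniqueness of the third point, every block meeting a line in $0$ or $2$ points), use the involutions $\mu_p$ plus inclusion--exclusion to pin triple intersections of blocks to $\{0,2^{k-3}\}$, deduce by a double count that ${\mathcal B}\cup\{\emptyset\}$ is closed under symmetric difference, and then exhibit the isomorphism explicitly via the evaluation map into $U^*$. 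All the counting checks out (in particular $2x=2^k-3\cdot 2^{k-2}=2^{k-2}$ and $N_0=1$), and the final linear-algebra step correctly identifies blocks with hyperplane complements. What your approach buys is self-containedness: it avoids both the dualization/self-duality step and the external results on maximal singular subspaces, in effect re-proving the content of \cite[Proposition 1]{PPZ2} from scratch on the primal side. What the paper's approach buys is brevity and consistency with its geometric framework, since the whole article is phrased in terms of cliques and singular subspaces of ${\mathcal P}_{2^{k-2}}(2^k-1)$; your argument is longer but would stand alone if those references were unavailable.
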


\begin{proof}
Let ${\mathcal D}=(P,{\mathcal B})$ be a symmetric $(2^k-1,2^{k-1},2^{k-2})$-design
and let ${\mathcal D}^*=({\mathcal B}, \{{\mathcal B}_p\}_{p\in P})$ be the dual design. 
Recall that ${\mathcal B}_p$ is the set of blocks of ${\mathcal D}$ containing $p$. 
The above condition can be reformulated as follows: for any distinct $p,q\in P$ there is $t\in P$ such that 
\begin{equation}\label{eq-line}
{\mathcal B}_p\cap {\mathcal B}_q\cap {\mathcal B}_t=\emptyset.
\end{equation}
Since ${\mathcal B}_p,{\mathcal B}_q, {\mathcal B}_t$ are $2^{k-1}$-element subsets of ${\mathcal B}$ mutually intersecting in $2^{k-2}$-element subsets,
\eqref{eq-line} holds if and only if ${\mathcal B}_t$ is the symmetric difference of ${\mathcal B}_p$ and ${\mathcal B}_q$, in other words, 
${\mathcal B}_p,{\mathcal B}_q, {\mathcal B}_t$ form a line in the geometry ${\mathcal P}_{2^{k-2}}(2^k-1)$. 
Therefore,  for any distinct $p,q\in P$ there is $t\in P$ such that there is no block of ${\mathcal D}$ containing $p,q,t$
if and only if the blocks of ${\mathcal D}^*$ form a maximal singular subspace of ${\mathcal P}_{2^{k-2}}(2^k-1)$
which is equivalent to the fact that ${\mathcal D}^*$ is  isomorphic to the design of points and hyperplane complements of ${\rm PG}(k-1,2)$. 
The latter design is self-dual and  we get the claim.
\end{proof}

As above, we assume that ${\mathcal D}=(P,{\mathcal B})$ is a symmetric $(2^k-1,2^{k-1},2^{k-2})$-design.
Suppose that for some distinct $p,q\in P$ there is $t\in P$ such that there is no block of ${\mathcal D}$ containing $p,q,t$.
It was established above that such $t$ is unique 
and the subset $\{p,q,t\}$ corresponds to a line of ${\mathcal P}_{2^{k-2}}(2^k-1)$ contained in the set of blocks of ${\mathcal D}^*$.
For this reasons, we say that $\{p,q,t\}$ is a {\it line} of ${\mathcal D}$ and $p,q$ are {\it collinear} points of ${\mathcal D}$.

We say also that $p\in P$ is a {\it center point} of ${\mathcal D}$ if it is collinear to the remaining points of ${\mathcal D}$.
Then
$$P\setminus \{p\}=\{q_1,t_1,\dots,q_{2^{k-1}-1},t_{2^{k-1}-1}\},$$
where $\{p,q_i,t_i\}$ is a line of ${\mathcal D}$ for every $i\in \{1,\dots,2^{k-1}-1\}$.
Since blocks do not contain lines, 
every block containing $p$ intersects each $\{q_i,t_i\}$ precisely in one point.
If a block does not contain $p$ and contains one of $q_i,t_i$, then it also contains the other.
Indeed, there are precisely $2^{k-1}$ blocks containing $q_i$ and precisely $2^{k-2}$ of these blocks contain $p$;
on the other hand, there are precisely $2^{k-2}$ blocks containing $q_i,t_i$ and each of them does not contain $p$;
this implies that every block containing $q_i$ contains precisely one of $p,t_i$. 
Therefore, every block contained in $P\setminus \{p\}$ is the union of $2^{k-2}$ distinct $\{q_i,t_i\}$.

In the case when all points of ${\mathcal D}$ are center points, 
${\mathcal D}$ is isomorphic to the design of points and hyperplane complements of ${\rm PG}(k-1,2)$ (Proposition \ref{prop-ps}).
Then ${\mathcal B}$ is a maximal singular subspace of ${\mathcal P}_{2^{k-2}}(2^k-1)$
and every such subspace is isomorphic to ${\rm PG}(k-1,2)$. 
For every $p\in P$ the subset ${\mathcal B}^p={\mathcal B}\setminus {\mathcal B}_p$ consisting of all blocks which do not contain $p$ is a hyperplane of ${\mathcal B}$
(since ${\mathcal B}^p$ contains precisely $2^{k}-1-2^{k-1}=2^{k-1}-1$ blocks and 
for any distinct blocks $B,B'\subset P\setminus \{p\}$ we have $B\triangle B'\subset P\setminus \{p\}$).

\section{One class of symmetric $(2^k-1, 2^{k-1}, 2^{k-2})$-designs with center blocks}

Let ${\mathcal D}=(P, {\mathcal B})$ be a symmetric $(2^k-1, 2^{k-1}, 2^{k-2})$-design with $k\ge 3$.
Then ${\mathcal B}$ is a $(2^k-1)$-element clique in the collinearity graph of ${\mathcal P}_{2^{k-2}}(2^k-1)$. 
We say that $O\in {\mathcal B}$ is a {\it center block}  if  for every $B\in {\mathcal B}$ distinct from $O$
the line of ${\mathcal P}_{2^{k-2}}(2^k-1)$ connecting $O$ and $B$ is contained in ${\mathcal B}$, in other words,
the symmetric difference $O\triangle B$ is a block of ${\mathcal D}$.
Every line of ${\mathcal P}_{2^{k-2}}(2^k-1)$ contained in ${\mathcal B}$ is a line of the dual design ${\mathcal D}^*$
and, consequently, 
every center block of ${\mathcal D}$ is a center point of ${\mathcal D}^*$.

\begin{exmp}{\rm
Let $O$ be a $2^{k-1}$-element subset of $P$.
The complement  $O^c=P\setminus O$ consists of $2^{k-1}-1$ elements. 
We take any $(2^{k-1}-1)$-element subset $Z\subset O$ and any symmetric $(2^{k-1}-1, 2^{k-2}, 2^{k-3})$-designs
$${\mathcal D}_O=(O^c, {\mathcal B}_O),\;\;{\mathcal D}_Z=(Z, {\mathcal B}_Z).$$
If $\delta:{\mathcal B}_O\to {\mathcal B}_Z$ is a bijection and 
${\mathcal B}$ is formed by $O$ and all 
$$X\cup \delta(X),\;\; X\cup (O\setminus \delta(X))\;\mbox{ with }\; X\in {\mathcal B}_O,$$
then ${\mathcal D}=(P, {\mathcal B})$ is a symmetric $(2^k-1, 2^{k-1}, 2^{k-2})$-design \cite[Proposition 2]{PPZ2}.
For every $X\in {\mathcal B}_O$ the blocks 
$$O, X\cup \delta(X), X\cup (O\setminus \delta(X))$$
form a line of ${\mathcal P}_{2^{k-2}}(2^k-1)$ which means that
$O$ is a center block of ${\mathcal D}$.
}\end{exmp}

As above, ${\mathcal D}=(P, {\mathcal B})$ is a symmetric $(2^k-1, 2^{k-1}, 2^{k-2})$-design and $k\ge 3$.
Suppose that $O$ is a center block of ${\mathcal D}$.
By \cite[Proposition 3]{PPZ2}, there is a unique symmetric $(2^{k-1}-1, 2^{k-2}, 2^{k-3})$-design $${\mathcal D}_O=(O^c, {\mathcal B}_O)$$
and for every $(2^{k-1}-1)$-element subset $Z\subset O$ there are a unique symmetric $(2^{k-1}-1, 2^{k-2}, 2^{k-3})$-design $${\mathcal D}_Z=(Z, {\mathcal B}_Z)$$
and a unique bijection $\delta_Z:{\mathcal B}_O\to {\mathcal B}_Z$ 
such that ${\mathcal B}$ consists of $O$ and all 
$$X\cup \delta_Z(X),\;\; X\cup (O\setminus \delta_Z(X))\;\mbox{ with }\; X\in {\mathcal B}_O.$$
For other $(2^{k-1}-1)$-element subset $Z'\subset O$ we have 
$$\delta_{Z'}(X)=\delta_Z(X)$$
if $\delta(X)$ is contained in $Z\cap Z'$ and 
$$\delta_{Z'}(X)=O\setminus\delta_Z(X)$$
otherwise.
In other words, ${\mathcal B}_Z\cap {\mathcal B}_{Z'}$ consists of all blocks of ${\mathcal D}_Z$ contained in $Z\cap Z'$
and for every $Y\in {\mathcal B}_Z\setminus {\mathcal B}_{Z'}$ the complement $O\setminus Y$ belongs to ${\mathcal B}_{Z'}\setminus {\mathcal B}_Z$.

If $k\in \{3,4\}$, then all symmetric $(2^{k-1}-1, 2^{k-2}, 2^{k-3})$-designs are isomorphic
and, consequently, 
${\mathcal D}_Z$ and ${\mathcal D}_{Z'}$ are isomorphic for 
any $(2^{k-1}-1)$-element subsets $Z,Z'\subset O$.
If $k\ge 5$, then  we are able to show the same  only in one special case.

\begin{prop}\label{prop-Z}
If for a certain $(2^{k-1}-1)$-element subset $Z\subset O$ the design ${\mathcal D}_Z$ is 
isomorphic to the design of points and hyperplane complements of ${\rm PG}(k-2,2)$,
then the same holds for any other $(2^{k-1}-1)$-element subset $Z'\subset O$.
\end{prop}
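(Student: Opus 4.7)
The plan is to exploit the affine-hyperplane structure that the hypothesis confers on the whole $2^{k-1}$-element set $O$. First I would fix an isomorphism of $\mathcal{D}_Z$ with the design of points and hyperplane complements of ${\rm PG}(k-2,2)$ and use it to identify $Z$ with $\mathbb{F}_2^{k-1}\setminus\{0\}$, so that each block of $\mathcal{D}_Z$ becomes a set of the form $\phi^{-1}(1)$ for a nonzero linear functional $\phi$. Letting $z:=O\setminus Z$ correspond to the origin, this identifies $O$ with $\mathbb{F}_2^{k-1}$. Under this identification the blocks of $\mathcal{D}_Z$ are precisely the affine hyperplanes of $O$ that avoid $z$; equivalently, the affine hyperplanes of $O$ group into $2^{k-1}-1$ complementary (parallel) pairs $\{H,O\setminus H\}$, and $\delta_Z$ singles out, in each pair, the coset not containing $z$.

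Next I would analyse $\mathcal{D}_{Z'}$ via the switching rule recalled immediately before the statement. Set $z':=O\setminus Z'$. For each $X\in\mathcal{B}_O$, that rule gives $\delta_{Z'}(X)=\delta_Z(X)$ when $z'\notin\delta_Z(X)$ and $\delta_{Z'}(X)=O\setminus\delta_Z(X)$ otherwise. In the first case $\delta_{Z'}(X)$ is the same affine hyperplane and automatically avoids $z'$; in the second case $\delta_{Z'}(X)$ is its parallel partner, which contains $z$ but not $z'$. Moreover, as $X$ varies, each parallel pair is hit exactly once, so $\mathcal{B}_{Z'}$ is precisely the set of the $2^{k-1}-1$ affine hyperplanes of $O$ that avoid $z'$.

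Finally, recentring the affine structure at $z'$ (via the translation $x\mapsto x+z'$, an affine automorphism of $O$ interchanging $z$ and $z'$) converts these affine hyperplanes into the complements of linear hyperplanes in $\mathbb{F}_2^{k-1}$. Restricting to $Z'=O\setminus\{z'\}$ yields an isomorphism of $\mathcal{D}_{Z'}$ with the design of points and hyperplane complements of ${\rm PG}(k-2,2)$, which is exactly what we want.

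The main obstacle I expect is the setup in the first step: one must verify that the combinatorial switching rule for $\delta_{Z'}$ described in the excerpt agrees with the affine rule ``pick the coset not containing $z'$''. Once that correspondence is confirmed, the remaining steps amount to a coordinate change on $\mathbb{F}_2^{k-1}$ and are essentially bookkeeping.
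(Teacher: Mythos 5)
Your proof is correct and is essentially the paper's argument in coordinates: the translation $x\mapsto x+z'$ of $\mathbb{F}_2^{k-1}$ is exactly the permutation $\alpha_p=(p,p')(q_1,t_1)\dots(q_{2^{k-2}-1},t_{2^{k-2}-1})$ that the paper builds from the lines of ${\mathcal D}_Z$ through the point $p=z'$, and both arguments verify via the same switching rule for $\delta_{Z'}$ that this map carries ${\mathcal B}_Z$ onto ${\mathcal B}_{Z'}$. The only difference is presentational (explicit coordinatization versus the intrinsic line/center-point description), so the two proofs coincide in substance.
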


\begin{proof}
Let $Z$ and $Z'$ be distinct $(2^{k-1}-1)$-element subsets of $O$. 
If $p\in Z\setminus Z'$ and $p'\in Z'\setminus Z$, then
$$Z\cap Z'=O\setminus\{p,p'\}.$$
Recall that ${\mathcal B}_Z\cap {\mathcal B}_{Z'}$ consists of all blocks of ${\mathcal D}_Z$ contained in $Z\cap Z'$
and for every block $Y\in {\mathcal B}_Z$ containing $p$ the complement $O\setminus Y$ is a block of ${\mathcal D}_{Z'}$ containing $p'$. 

If ${\mathcal D}_Z$ is isomorphic to the designs of points and hyperplane complements of ${\rm PG}(k-2,2)$,
then every point of $Z$, in particular, $p$ is a center point of ${\mathcal D}_Z$. 
Therefore, 
$$Z\cap Z'=Z\setminus \{p\}=\{q_1,t_1,\dots,q_{2^{k-2}-1},t_{2^{k-2}-1}\},$$
where $\{p,q_i,t_i\}$ is a line of ${\mathcal D}_Z$ for every $i\in \{1,\dots,2^{k-2}-1\}$. 
By Section 2, 
every block of ${\mathcal D}_Z$ contained in $Z\cap Z'$ is the union of $2^{k-3}$ distinct $\{q_i,t_i\}$
and every block of ${\mathcal D}_Z$ containing $p$ intersects each $\{q_i,t_i\}$ precisely in one point. 
The permutation
$$\alpha_p=(p,p')(q_1,t_1)\dots(q_{2^{k-2}-1},t_{2^{k-2}-1})$$
preserves every block of ${\mathcal D}_Z$ contained in $Z\cap Z'$
and transposes every $Y\in {\mathcal B}_Z$ containing $p$ and $O\setminus Y$.
Therefore, $\alpha_p$ preserves ${\mathcal B}_Z\cap {\mathcal B}_{Z'}$ and transposes ${\mathcal B}_Z\setminus {\mathcal B}_{Z'}$ and 
${\mathcal B}_{Z'}\setminus {\mathcal B}_Z$, i.e. it induces an isomorphism between ${\mathcal D}_Z$ and  ${\mathcal D}_{Z'}$.
\end{proof}

From this moment, we assume that $O$ is a center block of ${\mathcal D}$ such that 
${\mathcal D}_Z$ is isomorphic to the design of points and hyperplane complements of ${\rm PG}(k-2,2)$
for every $(2^{k-1}-1)$-element subset $Z\subset O$ (see Proposition \ref{prop-Z}). 
In this case, we say that ${\mathcal D}$ is the {\it sum} of the symmetric $(2^{k-1}-1, 2^{k-2}, 2^{k-3})$-design  ${\mathcal D}_{O}$ and 
the design of points and hyperplane complements of ${\rm PG}(k-2,2)$. 
This sum depends  on the bijections $\delta_Z:{\mathcal B}_O\to {\mathcal B}_Z$ and each of these bijections determines the others.

Recall that $C^k_2$  is an abelian group generated by $k$ mutually commuting involutions. 
It contains precisely $2^k-1$ non-identity elements and each of them  is an involution. 
This group acts on ${\rm PG}(k-1,2)$ as the group of all reflections across hyperplanes.
There is a one-to-one correspondence $\alpha \leftrightarrow H_{\alpha}$
between non-identity elements of $C^k_2$ and hyperplanes of ${\rm PG}(k-1,2)$ such that 
for distinct involutions $\alpha,\beta\in C^k_2$ the hyperplane $H_{\alpha\beta}$ contains $H_{\alpha}\cap H_{\beta}$
and is distinct from the hyperplanes $H_{\alpha},H_{\beta}$.

We show that {\it the full automorphism group of ${\mathcal D}$ contains a subgroup $C^{k-1}_2$
which acts sharply transitively on $O$ and leaves every point of $O^c$ fixed.}

Let us take any $(2^{k-1}-1)$-element subset $Z\subset O$ and
the unique point $p'\in O$ which does not belong to $Z$. 
By our assumption, ${\mathcal B}_Z$ is a maximal singular subspace of ${\mathcal P}_{2^{k-3}}(2^{k-1}-1)$.
For every $p\in Z$ the subset ${\mathcal B}^p_Z$ formed by all blocks of ${\mathcal D}_Z$ which do not contain $p$ is a hyperplane of this subspace
(see Section 2). 
In the proof of Proposition \ref{prop-Z}, we constructed the permutation $\alpha_p$ which transposes ${\mathcal D}_Z$ and ${\mathcal D}_{Z'}$
for
$$Z'=(Z\setminus\{p\})\cup \{p'\}.$$
Observe that $\alpha_p$ is an involution which leaves each point of $O^c$ fixed.
It preserves every block belonging to ${\mathcal B}^p_Z={\mathcal B}_Z\cap {\mathcal B}_{Z'}$
and transposes every $Y\in {\mathcal B}_Z\setminus {\mathcal B}^p_Z$ and $O\setminus Y$.
This is an automorphism of ${\mathcal D}$ which acts on the set of blocks as follows: 
$\alpha_p$ preserves the center block $O$, for $X\in {\mathcal B}_O$ it preserves the blocks
$$X\cup \delta_{Z}(X)\;\mbox{ and }\;X\cup (O\setminus\delta_{Z}(X))$$ 
if $\delta_Z(X)$ belongs to ${\mathcal B}^p_Z$
and transposes them otherwise. 

For any distinct $p,q\in Z$  we take $t\in Z$ such that $\{p,q,t\}$ is a line of ${\mathcal D}_Z$. 
The hyperplane ${\mathcal B}^t_Z$ contains ${\mathcal B}^p_Z\cap {\mathcal B}^q_Z$ and is distinct from the hyperplanes ${\mathcal B}^p_Z,{\mathcal B}^q_Z$.
Indeed, if $Y\in {\mathcal B}_Z$ contains $t$, then it also contains $p$ or $q$ (see Section 2); 
therefore, if $p,q\not\in Y$, then $t\not\in Y$.
Since ${\mathcal B}_Z$ is the union of the hyperplanes ${\mathcal B}^p_Z,{\mathcal B}^q_Z,{\mathcal B}^t_Z$,
every $Y\in {\mathcal B}_Z$ is contained in precisely one or all of  them. %${\mathcal B}^p_Z,{\mathcal B}^q_Z,{\mathcal B}^t_Z$.
A direct verification shows that 
$$\alpha_p\alpha_q(Y)=\alpha_q\alpha_p(Y)=\alpha_t(Y)$$ 
for all $Y\in {\mathcal B}_Z$. 
The automorphisms $\alpha_p\alpha_q, \alpha_q\alpha_p, \alpha_t$ leave every point of $O^c$ fixed.
So, they induce the same transformation of the set of blocks of ${\mathcal D}$ which  means that 
$$\alpha_p\alpha_q=\alpha_q\alpha_p=\alpha_t.$$ 
Therefore, all $\alpha_p$, $p\in Z$ (together with the identity) form a group $C^{k-1}_2$. 
We denote this group by $C^{k-1}_2(Z)$.
Since $\alpha_p$ is the unique element of this group transposing $p$ and $p'$, 
the action of $C^{k-1}_2(Z)$ on $O$ is sharply transitive. 

\begin{prop}\label{prop-C}
We have $C^{k-1}_2(Z)=C^{k-1}_2(Z')$
for any $(2^{k-1}-1)$-element subsets $Z,Z'\subset O$.
\end{prop}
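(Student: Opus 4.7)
The plan is to identify both groups with the single, intrinsically defined subgroup
\[
G = \{\gamma \in \Aut(\mathcal{D}) : \gamma(x) = x \text{ for every } x \in O^c\}.
\]
From the construction preceding the proposition, both $C^{k-1}_2(Z)$ and $C^{k-1}_2(Z')$ lie inside $G$, and each has order $2^{k-1}$. Since $C^{k-1}_2(Z)$ already acts sharply transitively on $O$, so does $G$, and by orbit-stabilizer $|G|=2^{k-1}\cdot|G_{p'}|$ for any chosen $p'\in O$. It therefore suffices to show $G_{p'}=\{1\}$, for then $|G|=2^{k-1}$ and both $C^{k-1}_2(Z)$ and $C^{k-1}_2(Z')$ must coincide with $G$.

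To treat the stabilizer, fix $p'\in O$ and invoke Proposition \ref{prop-Z} with the specific choice $Z=O\setminus\{p'\}$, together with its bijection $\delta_Z:\mathcal{B}_O\to\mathcal{B}_Z$. Let $\gamma\in G_{p'}$. Since $\delta_Z(X)\subset Z$ avoids $p'$, the blocks of $\mathcal{D}$ containing $p'$ other than $O$ itself are precisely the sets $X\cup(O\setminus\delta_Z(X))$ with $X\in\mathcal{B}_O$. As $\gamma$ fixes $p'$ and every $X\subset O^c$ pointwise while permuting blocks, each such $X\cup(O\setminus\delta_Z(X))$ must be sent to another block through $p'$ meeting $O^c$ in the same $X$; the only candidate is itself. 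Hence $\gamma$ preserves $O\setminus\delta_Z(X)$, and therefore also $Z\setminus\delta_Z(X)$, setwise.

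As $X$ ranges over $\mathcal{B}_O$ the images $\delta_Z(X)$ range over all blocks of $\mathcal{D}_Z$, so their complements in $Z$ range over all hyperplanes of $\mathcal{D}_Z\cong{\rm PG}(k-2,2)$. Thus $\gamma|_Z$ is a permutation of $Z$ preserving every hyperplane of ${\rm PG}(k-2,2)$ setwise; since in any projective space the intersection of the hyperplanes through a given point equals that point, we conclude $\gamma|_Z$ is the identity. Combined with $\gamma(p')=p'$ and the fact that $\gamma$ fixes $O^c$ pointwise, this forces $\gamma$ to be the identity of $\Aut(\mathcal{D})$, completing the argument. I do not anticipate any genuine obstacle; the only mildly delicate ingredient is reading off the blocks through $p'$ correctly from the decomposition of Section 3, which is just an unpacking of the definitions.
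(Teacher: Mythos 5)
Your proof is correct, but it takes a genuinely different route from the paper. The paper conjugates: for an automorphism $f$ preserving $O$ and carrying $Z$ to $Z'$ one checks $f\alpha_p f^{-1}=\alpha'_{p'}$, hence $fC^{k-1}_2(Z)f^{-1}=C^{k-1}_2(Z')$, and then specializes $f=\alpha_p$ with $p\in Z\setminus Z'$, which transposes $Z$ and $Z'$ and lies in $C^{k-1}_2(Z)$, forcing equality. You instead show that both groups coincide with the full pointwise stabilizer $G$ of $O^c$ in the automorphism group of ${\mathcal D}$, by proving that the stabilizer in $G$ of a point $p'\in O$ is trivial: a $\gamma\in G_{p'}$ must fix each block $X\cup(O\setminus\delta_Z(X))$ through $p'$ (these being the only blocks through $p'$ besides $O$, and distinguished by their traces on $O^c$), hence fixes every hyperplane of ${\mathcal D}_Z\cong{\rm PG}(k-2,2)$ setwise, hence is the identity on $Z$ because the hyperplanes through a point of a projective space meet only in that point. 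All steps check out, including the counting of blocks through $p'$ and the fact that the $Z\setminus\delta_Z(X)$ exhaust the hyperplanes. Your version buys something the paper's does not state: the intrinsic characterization of $C^{k-1}_2(O)$ as the \emph{entire} group of automorphisms fixing $O^c$ pointwise, from which the normality remark after the proposition (and the containment $G\subset C^{k-1}_2(O)$ needed at the end of the proof of the main theorem) follow immediately; the paper's conjugation argument is shorter and delivers the normality statement directly. One small slip: the sentence ``since $C^{k-1}_2(Z)$ acts sharply transitively on $O$, so does $G$'' is not yet justified at that point --- only transitivity is inherited by the overgroup, and the sharpness is precisely what your stabilizer computation then establishes; the logic of the rest of your argument makes clear this is a slip of the pen rather than a gap.
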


\begin{proof}
Let $Z$ and $Z'$ be distinct $(2^{k-1}-1)$-element subsets of $O$.
The non-identity elements of $C^{k-1}_2(Z)$ and $C^{k-1}_2(Z')$ are $\{\alpha_p\}_{p\in Z}$ and $\{\alpha'_{p'}\}_{p'\in Z'}$, respectively.
%Suppose that $\alpha_p$, $p\in Z$ and $\alpha'_{p'}$, $p'\in Z'$ are the non-identity elements of $C^{k-1}_2(Z)$ and $C^{k-1}_2(Z')$,
%respectively.
%$$C^{k-1}_2(Z)=\{\alpha_p\}_{p\in Z},\;\;C^{k-1}_2(Z')=\{\alpha'_{p'}\}_{p'\in Z'}.$$
If $f$ is an automorphism of ${\mathcal D}$ preserving $O$ and sending $Z$ to $Z'$,
then $f$ induces an isomorphism of ${\mathcal D}_Z$ to ${\mathcal D}_{Z'}$;
in particular, it transfers every ${\mathcal B}^p_Z$ to a certain ${\mathcal B}^{p'}_{Z'}$. 
A direct verification show that 
$$f\alpha_pf^{-1}(Y)=\alpha'_{p'}(Y)$$
for all $Y\in {\mathcal B}_{Z'}$. 
The automorphisms $\alpha_p,\alpha'_{p'}$ leave every element of $O^c$ fixed
and the same holds for $f\alpha_pf^{-1}$ (since $f$ preserves $O$ and, consequently, $O^c$). 
So, $f\alpha_pf^{-1}$ and $\alpha'_{p'}$ induce the same transformation of the set of blocks of ${\mathcal D}$
which means that 
$$f\alpha_pf^{-1}=\alpha'_{p'}.$$
Therefore, 
$$fC^{k-1}_2(Z)f^{-1}=C^{k-1}_2(Z').$$
Let us take $p\in Z\setminus Z'$. %such that ${\mathcal B}^p_Z={\mathcal B}_Z\cap {\mathcal B}_{Z'}$.
Then $\alpha_p$ transposes $Z,Z'$ and
$$\alpha_pC^{k-1}_2(Z)\alpha_p=C^{k-1}_2(Z').$$
This implies that $C^{k-1}_2(Z)=C^{k-1}_2(Z')$, since $\alpha_p$ is an element of $C^{k-1}_2(Z)$.
\end{proof}

So, $C^{k-1}_2(Z)$ does not depend on $Z$. 
For this reason, we will denote this group by $C^{k-1}_2(O)$. 

\begin{rem}{\rm
It was established in the proof of Proposition \ref{prop-C} that
$$fC^{k-1}_2(O)f^{-1}=C^{k-1}_2(O)$$
for every automorphism $f$ of ${\mathcal D}$ preserving $O$, i.e.
$C^{k-1}_2(O)$ is a normal subgroup in the group of all automorphisms of ${\mathcal D}$ preserving $O$.
}\end{rem}
%This group plays an important role in the description of the full automorphism groups of symmetric $(15,8,4)$-designs \cite{PPZ3}.

\section{Main result}
Our main result states that there exist no other symmetric $(2^k-1, 2^{k-1}, 2^{k-2})$-designs with $k\ge 3$ which are
$(2^{k-1}-1)$-pyramidal over abelian groups.
%\section{A characterization of symmetric $(2^k-1, 2^{k-1}, 2^{k-2})$-design which are $(2^{k-1}-1)$-pyramidal over $C^{k-1}_2$}

\begin{theorem}\label{theorem-pyr}
Suppose that ${\mathcal D}$ is a symmetric $(2^k-1, 2^{k-1}, 2^{k-2})$-design with $k\ge 3$
which is $(2^{k-1}-1)$-pyramidal over an abelian group $G$, i.e.
there is an abelian group $G$ of automorphisms of ${\mathcal D}$  which leaves precisely $2^{k-1}-1$ points of ${\mathcal D}$ 
fixed and  acts sharply transitively on the remaining points.
Let $O$ be the complement of the set of fixed points of  $G$.
Then $O$ is a center block of ${\mathcal D}$,
the design ${\mathcal D}$  is the sum of a symmetric $(2^{k-1}-1, 2^{k-2}, 2^{k-3})$-design and 
the design of points and hyperplane complements of ${\rm PG}(k-2,2)$  and $G=C^{k-1}_2(O)$. 
\end{theorem}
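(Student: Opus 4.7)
The plan is to establish the result in four stages: show $O$ is a block and in fact a center block, deduce $G\cong C^{k-1}_2$, verify ${\mathcal D}_Z$ is the design of points and hyperplane complements of ${\rm PG}(k-2,2)$, and finally identify $G$ with $C^{k-1}_2(O)$. To begin, since $|{\mathcal B}|=2^k-1$ is odd and $|G|=2^{k-1}$, some block $B$ is $G$-invariant; being a union of $G$-orbits on $P$ (the $2^{k-1}-1$ singletons in $O^c$ and $O$ itself) and having size $2^{k-1}$, necessarily $B=O$, and $O$ is the unique $G$-fixed block.

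For any other block write $B=Y\cup X$ with $Y=B\cap O^c$ and $X=B\cap O$. Since $G$ fixes $O^c$ pointwise, every block in the orbit $GB$ shares the same $Y$; if $Y\cup X$ and $Y\cup X'$ are distinct blocks in the orbit, the design intersection identity $|Y|+|X\cap X'|=2^{k-2}$ gives $X\cap X'=\emptyset$, and since $|X|=|X'|=|O|/2$ at most two blocks fit in the orbit. Together with the previous step this forces every orbit in ${\mathcal B}\setminus\{O\}$ to have size exactly $2$, its second block equal to $Y\cup(O\setminus X)=B\triangle O$ (so $O$ is a center block), and its stabilizer $H_B$ to be an index-$2$ subgroup of $G$. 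I would then verify injectivity of the assignment orbit $\mapsto(Y,H_B)$ in each coordinate: if two distinct orbits shared $Y$ with distinct stabilizers $H_1\ne H_2$, any $H_1$-coset and $H_2$-coset would meet in $|H_1\cap H_2|=2^{k-3}$ elements, giving block intersection $2^{k-2}+2^{k-3}\ne 2^{k-2}$; and if two distinct orbits shared $H$ but had distinct $Y_1,Y_2$, either $X_1=X_2$ (block intersection $2^{k-3}+2^{k-2}$) or $X_1\cap X_2=\emptyset$ (block intersection $2^{k-3}$), neither equal to $2^{k-2}$. Hence the $2^{k-1}-1$ orbits produce $2^{k-1}-1$ distinct index-$2$ subgroups of $G$. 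Since an abelian group of order $2^{k-1}$ has $2^r-1$ index-$2$ subgroups, where $r$ is the number of cyclic factors in its primary decomposition, we obtain $r\ge k-1$; combined with $r\le k-1$ this forces $r=k-1$ with every factor $C_2$, so $G\cong C^{k-1}_2$, and every index-$2$ subgroup of $G$ arises as some $H_B$.

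To finish, fix $p'\in O$ and identify $O$ with $G$ via $\phi(g)=g(p')$, so $Z=O\setminus\{p'\}$ corresponds to $G\setminus\{1\}$; since each $H_B$ contains $1$, its coset inside $Z$ is $G\setminus H_B$, and thus ${\mathcal B}_Z=\{G\setminus H\colon H\text{ index-}2\text{ subgroup of }G\}$. For distinct $\alpha,\beta\in G\setminus\{1\}$ the product $\alpha\beta$ lies in $G\setminus\{1,\alpha,\beta\}$, and since $G/H\cong C_2$ any index-$2$ subgroup $H$ contains at least one of $\alpha,\beta,\alpha\beta$; therefore no block of ${\mathcal D}_Z$ contains $\{\alpha,\beta,\alpha\beta\}$, and Proposition~\ref{prop-ps} yields that ${\mathcal D}_Z$ is isomorphic to the design of points and hyperplane complements of ${\rm PG}(k-2,2)$. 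By the definition in Section~3, ${\mathcal D}$ is therefore the sum of ${\mathcal D}_O$ with this projective design. To identify $G=C^{k-1}_2(O)$, observe that under $\phi$ each $g\in G$ acts on $O$ as left multiplication by $g$; on the other hand, for $p=\phi(g)\in Z$ the involution $\alpha_p\in C^{k-1}_2(O)$ fixes $O^c$ pointwise, swaps $p\leftrightarrow p'$, and swaps the other two points of every line $\{g,h,gh\}$ of ${\mathcal D}_Z$ through $g$, sending $h\mapsto gh$ for each $h\in G\setminus\{1,g\}$, which is again left multiplication by $g$. Since automorphisms of ${\mathcal D}$ are determined by their action on $P$ and both $g$ and $\alpha_p$ fix $O^c$ pointwise and coincide on $O$, they are equal; hence $C^{k-1}_2(O)\subseteq G$, and equality follows from $|C^{k-1}_2(O)|=|G|=2^{k-1}$.

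The main obstacle is the intersection-size bookkeeping behind the injective correspondence between $G$-orbits on ${\mathcal B}\setminus\{O\}$ and index-$2$ subgroups of $G$. Once this correspondence yields $2^{k-1}-1$ distinct index-$2$ subgroups, the elementary-abelian conclusion, the structure of ${\mathcal D}_Z$, and the final identification $G=C^{k-1}_2(O)$ all follow from direct group-theoretic calculations.
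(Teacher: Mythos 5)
Your proposal is correct in outline and reaches all three conclusions, but by a genuinely different route from the paper. The paper proves that $O$ is a center block by a direct intersection argument showing $|B\cap O|=2^{k-2}$ for $B\ne O$ (Lemma \ref{lemma1}, using Fisher's inequality to force $O\in{\mathcal B}$), and then establishes $G\cong C^{k-1}_2$ by a fairly long recursive construction of involutions $\beta_0,\dots,\beta_{k-2}$ via a descending chain of subgroups $G_i$ and nested subsets $Y_i$ of ${\mathcal O}$ (Lemma \ref{lemma2}); finally it shows every two points of ${\mathcal D}_Z$ are collinear using the cycle structure of the involutions and invokes Proposition \ref{prop-ps}. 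You instead get $O\in{\mathcal B}$ by a parity count on block orbits, observe that every other orbit has size $2$ with index-$2$ stabilizer, and extract $G\cong C^{k-1}_2$ from the fact that the $2^{k-1}-1$ orbits yield $2^{k-1}-1$ distinct index-$2$ subgroups while an abelian group of order $2^{k-1}$ and rank $r$ has only $2^r-1$ of them. This is slicker than the paper's Lemma \ref{lemma2} and, as a bonus, identifies ${\mathcal B}_Z$ explicitly as $\{G\setminus H\}$, making the ${\rm PG}(k-2,2)$ structure and the identification $G=C^{k-1}_2(O)$ essentially immediate.

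Two steps need more justification than you give. First, the assertion $|X|=|X'|=|O|/2$ (equivalently $|B\cap O^c|=2^{k-2}$) is exactly the content of the paper's Lemma \ref{lemma1} and cannot simply be cited as "the design intersection identity"; you need to derive it. It does follow from your setup: since $O$ is the unique $G$-invariant block, the orbit of $B$ contains two distinct blocks $Y\cup X$, $Y\cup X'$, and then $|X\cap X'|=2^{k-2}-|Y|\ge 0$ gives $|Y|\le 2^{k-2}$, while $|X\cap X'|\ge |X|+|X'|-|O|=2^{k-1}-2|Y|$ gives $|Y|\ge 2^{k-2}$; so $|Y|=2^{k-2}$, $X\cap X'=\emptyset$, and the orbit has size $2$. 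Without this, the whole correspondence with index-$2$ subgroups does not get off the ground. Second, in the injectivity check the values $|Y_1\cap Y_2|=2^{k-3}$ and the identification of each $X$ with a coset of $H_B$ are used silently: the former requires knowing that ${\mathcal B}_O$ is the block set of a symmetric $(2^{k-1}-1,2^{k-2},2^{k-3})$-design, which is available only after the center-block decomposition of Section 3 (via \cite[Proposition 3]{PPZ2}) has been invoked; the latter needs the remark that $H_B$, being a subgroup of the sharply transitive $G$ with $|H_B|=|X|$, acts regularly on the $H_B$-invariant set $X$. (Alternatively, the case $X_1=X_2$ is killed more cheaply by noting that $Y_1\cap Y_2=\emptyset$ is impossible for two $2^{k-2}$-subsets of the $(2^{k-1}-1)$-set $O^c$.) With these points supplied, your argument is complete.
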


\begin{rem}{\rm
For $k=2$ this statement is trivial. 
Every symmetric $(3,2,1)$-design is a $3$-element set with the $2$-element subsets as blocks. 
If a design automorphism leaves one of the three points fixed, then it transposes the remaining two. 
}\end{rem}

\section{Proof of Theorem \ref{theorem-pyr}}
Suppose that ${\mathcal D}$ is a symmetric $(2^k-1, 2^{k-1}, 2^{k-2})$-design with $k\ge 3$ and 
${\mathcal G}$ is an abelian group of automorphisms of ${\mathcal D}$ which leaves precisely $2^{k-1}-1$ points of ${\mathcal D}$ 
fixed and  acts sharply transitively on the remaining points.
Denote by $O$ the complement of the set of fixed points of  $G$.
Then  $|O|=2^{k-1}$.
Since $G$ acts on $O$ sharply transitively, $|G|=2^{k-1}$.

\begin{lemma}\label{lemma1}
The subset $O$ is a center block of ${\mathcal D}$.
The action of $G$ on the set of blocks of ${\mathcal D}$ is as follows: 
for every block $B\ne O$
an element of $G$ preserves $B$ and $O\triangle B$ or transposes these blocks. 
\end{lemma}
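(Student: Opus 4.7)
The plan is to first establish that $O$ is a $G$-fixed block (and in particular, that it \emph{is} a block), then to deduce that every other block $B$ meets $O$ in exactly $2^{k-2}$ points, and finally to exploit the fact that $G$ fixes $O^{c}$ pointwise in order to force $\sigma(B)\in\{B,\,O\triangle B\}$ for all $\sigma\in G$.

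For the first step I would argue by $2$-group orbit counting. Because $|G|=2^{k-1}$ is a power of $2$, every $G$-orbit on the set of $2^{k}-1$ blocks has size a power of $2$; since $2^{k}-1$ is odd, the number of singleton orbits must be odd, so $G$ has at least one fixed block. If $O^{*}$ is any $G$-fixed block, then $G$ acts freely on $O^{*}\cap O$ (as it does on $O$) and trivially on $O^{*}\cap O^{c}$, so $|G|=2^{k-1}$ divides $|O^{*}\cap O|$; combined with $|O^{*}|=2^{k-1}$ and $|O^{c}|=2^{k-1}-1<2^{k-1}$, this forces $O^{*}\cap O=O^{*}$, i.e. $O^{*}=O$. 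Thus $O$ emerges as the unique $G$-fixed block; in particular $O$ is a block.

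For the second step, since $O$ is a block, any block $B\neq O$ satisfies $|B\cap O|=\lambda=2^{k-2}$, hence also $|B\cap O^{c}|=2^{k-2}$. For any $\sigma\in G$ the equality $\sigma(B)\cap O^{c}=B\cap O^{c}$ is automatic, so $B\cap\sigma(B)$ already contains the $2^{k-2}$-element set $B\cap O^{c}$. If $\sigma(B)\neq B$, the block identity $|B\cap\sigma(B)|=\lambda=2^{k-2}$ forces $B\cap O$ and $\sigma(B\cap O)$ to be \emph{disjoint} inside $O$; as both have size $2^{k-2}=|O|/2$, necessarily $\sigma(B\cap O)=O\setminus(B\cap O)$, whence
\[
\sigma(B)\;=\;(B\cap O^{c})\cup\bigl(O\setminus(B\cap O)\bigr)\;=\;O\triangle B.
\]
Since $O$ is the \emph{only} $G$-fixed block, some $\sigma\in G$ moves $B$, and then $O\triangle B=\sigma(B)$ is automatically a block; this simultaneously yields the center-block property of $O$ and the dichotomy $\sigma(B)\in\{B,O\triangle B\}$ for every $\sigma\in G$.

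I do not anticipate a genuine obstacle: once one observes that $|G|$ equals both $|O|$ and $|O^{c}|+1$, and that $\lambda=|O|/2$, the conclusion is essentially forced. The only point that deserves a little care is the initial existence of a $G$-fixed block, where the parity argument is the cleanest device; everything afterwards reduces to manipulating the intersections $B\cap O$ and $B\cap O^{c}$ using nothing beyond $|B\cap B'|=\lambda$ and the pointwise fixing of $O^{c}$. Any finer information about orbit sizes or stabilizers (e.g. $|G_{B}|=2^{k-2}$) will emerge as a consequence of the dichotomy, so it need not be assumed here.
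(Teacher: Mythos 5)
Your proof is correct, and it reaches the key fact that $O$ is a block by a genuinely different route than the paper. The paper first proves, by a case analysis on $|B\cap O|$ using the transitivity of $G$ on $O$ and the pointwise fixing of $O^c$, that every block $B$ satisfies $|B\cap O|=2^{k-2}$ or $B=O$, and then rules out the possibility that $O$ is not a block by invoking Fisher's inequality (a family of $2^k$ subsets of a $(2^k-1)$-set pairwise meeting in $2^{k-2}$ points cannot exist). You instead observe that $G$ is a $2$-group acting on an odd number of blocks, so by parity it fixes some block $O^*$; freeness of the action on $O$ forces $|O^*\cap O|\in\{0,2^{k-1}\}$, and the cardinality $|O^c|=2^{k-1}-1$ rules out the first option, giving $O^*=O$. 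This buys you two things: you avoid the external appeal to Fisher's inequality, and once $O$ is known to be a block the intersection sizes $|B\cap O|=|B\cap O^c|=2^{k-2}$ come for free from the design axiom $|B\cap B'|=\lambda$, so the paper's two-case analysis becomes unnecessary. The concluding computation (that $\sigma(B)\neq B$ forces $\sigma(B\cap O)=O\setminus(B\cap O)$ and hence $\sigma(B)=O\triangle B$) is essentially identical in both arguments; the only cosmetic difference is that you produce an element moving $B$ from the uniqueness of the fixed block, while the paper exhibits one directly by transitivity. The formulation ``preserves both $B$ and $O\triangle B$ or transposes them'' follows at once from your dichotomy applied to both blocks together with the bijectivity of $\sigma$, which you could state explicitly but which is immediate.
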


\begin{proof}
First, we show  that $|B\cap O|=2^{k-2}$ or $B=O$ for every block  $B$ of ${\mathcal D}$.
This implies that $O$ is a block of ${\mathcal D}$. Indeed, if this fails, then $O$ intersects every block of ${\mathcal D}$
precisely in a $2^{k-2}$-element subset which contradicts Fisher's inequality \cite{MM}.

Let $B$ be a block of ${\mathcal D}$.
If $|B\cap O|<2^{k-2}$, then $|B\cap O^c|>2^{k-2}$. For every $\alpha\in G$ we have 
\begin{equation}\label{eq}
\alpha(B)=(B\cap O^c)\cup\alpha(B\cap O)
\end{equation}
which means that 
$$B\cap O^c\subset B\cap \alpha(B)$$
and, consequently,  $B\cap \alpha(B)$ contains more than $2^{k-2}$ element. 
This means that $\alpha(B)=B$. So, every element of $G$ preserves $B$.
On the other hand, $B\cap O$ is non-empty (since $|O^c|=2^{k-1}-1$) and $G$ acts on $O$ transitively.
We get a contradiction which shows that $|B\cap O|\ge 2^{k-2}$.

Consider the case when $|B\cap O|>2^{k-2}$.
If $\alpha\in G$ and $\alpha(B)\ne B$, then there is a $2^{k-2}$-element  subset $Y\subset B\cap O$ such 
that $\alpha(Y)\subset O\setminus B$ which contradicts the fact that $|O\setminus B|<2^{k-2}$. 
As in the previous case, we obtain that every element of $G$ preserves $B$. 
This is impossible if $B\ne O$, since $G$ acts on $O$ transitively.
Therefore, $B=O$. 

So, $O$ is a block of ${\mathcal D}$. For every block $B\ne O$ there is $\alpha\in G$ such that $\alpha(B)\ne B$
(we take any $\alpha\in G$ sending a point of $B\cap O$ to a point of $O\setminus B$).
Since 
$$|B\cap \alpha(B)|=|B\cap O^c|=2^{k-2},$$ 
\eqref{eq} shows that  $B\cap O$ and $\alpha(B\cap O)$ are disjoint $2^{k-2}$-element subsets of $O$.
Then  $\alpha(B\cap O)$ is the complement of $B\cap O$ in $O$ which implies that  $O\triangle B =\alpha(B)$.
Therefore, $O$ is a center block of ${\mathcal D}$.
The second statement of Lemma \ref{lemma1} follows from the same arguments.
\end{proof}

\begin{lemma}\label{lemma2}
The group $G$ is $C^{k-1}_2$.
\end{lemma}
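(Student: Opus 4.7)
The plan is to show that every non-identity element of $G$ is an involution. Since $|G|=|O|=2^{k-1}$ by sharp transitivity and $G$ is abelian, the structure theorem for finite abelian groups will then force $G\cong C^{k-1}_2$.

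First I would verify that $\alpha^2$ fixes every block of $\mathcal{D}$ setwise for each $\alpha\in G$. Note $\alpha(O)=O$, since $G$ permutes $O$ and leaves $O^c$ pointwise fixed. For any block $B\neq O$, Lemma \ref{lemma1} says $\alpha(B)$ is either $B$ or $O\triangle B$; in the latter case, applying the lemma to the block $O\triangle B$ (together with the fact that $\alpha$ does not fix $B$) gives $\alpha(O\triangle B)=B$. Either way $\alpha^2(B)=B$.

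Next I would argue that an element $\beta\in G$ fixing every block of $\mathcal{D}$ setwise must be the identity. Suppose instead $\beta\neq e$. By sharp transitivity of $G$ on $O$, the element $\beta$ has no fixed point in $O$, so we may pick $p\in O$ with $q:=\beta(p)\neq p$. Then every block $B$ through $p$ satisfies $B=\beta(B)\ni\beta(p)=q$, so every one of the $r=2^{k-1}$ blocks through $p$ also passes through $q$. This contradicts $\lambda=2^{k-2}<r$, since only $\lambda$ blocks contain any given pair of points.

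Applying this to $\beta=\alpha^2$ gives $\alpha^2=e$ for every $\alpha\in G$, which completes the proof. The only mild subtlety lies in the first step (ensuring $\alpha^2$ really fixes every block in the swapping case), but this is a direct consequence of Lemma \ref{lemma1}; the conceptual heart of the argument is the $r$-versus-$\lambda$ counting contradiction in the second step.
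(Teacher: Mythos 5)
Your proof is correct, and it takes a genuinely different and noticeably shorter route than the paper. The paper proves the lemma by a recursive construction: it introduces the family $\mathcal{O}$ of traces $B\cap O$, produces a first involution $\beta_0$ by a pigeonhole argument on squares of elements swapping some $Y$ with $O\setminus Y$, and then builds a descending chain of subgroups $G=G_0\supset G_1\supset\dots\supset G_{k-1}$ together with involutions $\beta_0,\dots,\beta_{k-2}$ generating $G$, each step halving a distinguished subset $Y_i$ of $O$. You instead show directly that $G$ has exponent $2$: by Lemma \ref{lemma1} every $\alpha\in G$ satisfies $\alpha^2(B)=B$ for all blocks $B$, and a non-identity element fixing all blocks would force all $r=2^{k-1}$ blocks through a point $p\in O$ to contain $\beta(p)$, contradicting $\lambda=2^{k-2}<r$ (the fixed-point-freeness on $O$ coming from sharp transitivity). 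An elementary abelian argument then yields $G\cong C_2^{k-1}$. Your approach is more elementary, and it in fact does not use the hypothesis that $G$ is abelian (exponent $2$ forces commutativity), whereas the paper's pigeonhole step does. Nothing needed later in the paper is lost: the only facts reused after the lemma are that every non-identity element is a fixed-point-free involution on $O$, hence a product of $2^{k-2}$ disjoint transpositions, and your argument delivers exactly that.
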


\begin{proof}
It is sufficient to show that $G$ is generated by involutions $\beta_0,\beta_1,\dots,\beta_{k-2}$ such that  
each  $\beta_i$ does not belong to the subgroup generated by $\beta_{i+1},\dots,\beta_{k-2}$.

{\it First step}.
Denote by ${\mathcal O}$ the set of $2^{k-2}$-element subsets of $O$
which are the intersections of $O$ with blocks of ${\mathcal D}$. 
For every $Y\in {\mathcal O}$ there is a unique block of ${\mathcal D}$ intersecting $O$ in $Y$.

For every $Y\in {\mathcal O}$ the complement $O\setminus Y$ also belongs to ${\mathcal O}$
(if $B$ is the block of ${\mathcal D}$ intersecting $O$ in $Y$, then $O\setminus Y$ is the intersection of $O$ and $O\triangle B$).

For any distinct $Y,Y'\in {\mathcal O}$ one of the following possibilities is realized:
$$Y'=O\setminus Y\;\mbox{ or }\; |Y\cap Y'|=2^{k-3}.$$
Suppose that $B$ and $B'$ are the blocks of ${\mathcal D}$ intersecting $O$ in $Y$ and $Y'$, respectively. 
If $Y'\ne O\setminus Y$, then $B'\ne O\triangle B$ which means that $B\cap O^c$ and $B'\cap O^c$
are distinct blocks of the symmetric $(2^{k-1}-1,2^{k-2},2^{k-3})$-design ${\mathcal D}_O$ (see Section 3). 
This implies that $|Y\cap Y'|=2^{k-3}$.

If $Y\in {\mathcal O}$, then every element of $G$ preserves $Y$ and $O\setminus Y$  or transposes them
(this follows from the second part of Lemma \ref{lemma1}). 
Furthermore, for every non-identity $\alpha\in G$ there is $Y\in {\mathcal O}$ such that $\alpha$ transposes $Y$ and $O\setminus Y$
(if this fails, then $\alpha(Y)=Y$ for all $Y\in {\mathcal O}$; 
since $\alpha$ preserves $O$ and leaves each point of $O^c$ fixed, 
it preserves all blocks of ${\mathcal D}$ which implies that $\alpha$ is identity).

{\it Second step}.
First, we show that $G$ contains an involution. 

Let us  take any $Y\in {\mathcal O}$ and any point $p\in Y$. 
If $\alpha\in G$ sends $p$ to a point of $Y$, then it preserves $Y$ and $O\setminus Y$;
if $\alpha$ sends $p$ to a point of $O\setminus Y$, then it transposes $Y$ and $O\setminus Y$.
Therefore, there are precisely $2^{k-2}$ elements of $G$ preserving $Y,O\setminus Y$
and the remaining $2^{k-2}$ elements of $G$ transpose these subsets. 
Let $\gamma_1,\dots, \gamma_{2^{k-2}}$ be the elements of $G$ which do not belong to
the subgroup of elements of $G$ preserving $Y,O\setminus Y$.
This subgroup contains $\gamma^2_i$  for every $i\in \{1,\dots, 2^{k-2}\}$. 
If every $\gamma_i$ is not an involution, or equivalently, every $\gamma^2_i$ is not identity,
then $\gamma^2_i=\gamma^2_j$ for some distinct $i,j$. Since $G$ is abelian, 
$$(\gamma_i\gamma^{-1}_j)^2=\gamma^2_i(\gamma^{-1}_j)^2=\gamma^2_i(\gamma^2_j)^{-1}=e.$$ 
Then $\gamma_i\gamma^{-1}_j$ is an involution as $\gamma_i\ne \gamma_j$.

So, $G$ contains an involution $\beta_0$.
By the first step, there is $Y_1\in {\mathcal O}$ such that  $\beta_0$ transposes $Y_1$ and $O\setminus Y_1$.
Let $G_1$ be the subgroup of elements of $G$ preserving $Y_1$. 
Then $G$ is generated by $G_1$ and $\beta_0$
(every $\alpha\in G\setminus G_1$ transposes $Y_1$ and $O\setminus Y_1$ and $\beta_0\alpha$ belongs to $G_1$).

Every $Y\in {\mathcal O}$ distinct from $Y_1,O\setminus Y_1$
intersects $Y_1$ precisely in a $2^{k-3}$-element subset. 
We take any such $Y$ and consider the $2^{k-3}$-element subsets $Y_1\cap Y$ and $Y_1\setminus Y$.
%If $\alpha\in G$ sends a point of $Y_1$ to a point of $Y_1$, then $\alpha\in G_1$.
As at the beginning of this step, we establish that  $G_1$ contains an involution $\beta_1$. 
There is $Y'_2\in {\mathcal O}$ such that $\beta_1$ transposes $Y'_2$ and $O\setminus Y'_2$.
Since $\beta_1$ preserves $Y_1,O\setminus Y_1$, the subset $Y'_2$ is distinct from $Y_1,O\setminus Y_1$ 
and 
$$Y_2=Y_1\cap Y'_2$$
consists of $2^{k-3}$ points.

Let $G_2$ be the subgroup of elements of $G_1$ preserving $Y_2$. 
If $\alpha\in G_1\setminus G_2$, then it preserves $Y_1$ and transposes $Y'_2,O\setminus Y'_2$
which means that $\alpha$ transposes $Y_2,Y_1\setminus Y_2$ and $\beta_1\alpha$ belongs to $G_2$.
Therefore, $G_1$  is generated by $G_2$ and $\beta_1$.

If $k=3$, then $Y_2$ is a one-element subset and the group $G_2$ is trivial
which means that $G$ is generated by $\beta_0,\beta_1$ and, consequently, it is $C^2_2$.

{\it Third step}.
Suppose that  $k\ge 4$ and there are 
$$Y_1,Y'_2,\dots, Y'_l\in {\mathcal O},\;\;\; 2\le l\le k-1$$ 
such that every
$$Y_i=Y_1\cap Y'_2\cap\dots\cap Y'_i,\;\;\;2\le i\le l$$ 
consists of $2^{k-i-1}$ points. 
Suppose also that there are groups 
$$G=G_0\supset G_1\supset \dots \supset G_l$$
and involutions  
$$\beta_0\in G_0,\beta_1\in G_1,\dots, \beta_{l-1}\in G_{l-1}$$
such that $G_i$ is formed by all elements of $G_{i-1}$ preserving $Y_i$
and  $G_{i-1}$ is generated by $G_i$ and $\beta_{i-1}$ for every $i\in \{1,\dots, l\}$.

If $l=k-1$, then the involutions $\beta_0,\beta_1,\dots,\beta_{k-2}$ are as required.

Suppose that $l\le k-2$.
Then $Y_l$ contains at least two points. 
For any two distinct point of $Y_l$ there is a block of ${\mathcal D}$ containing precisely one of them.
The intersection of this block and $O$ is an element of ${\mathcal O}$ which intersects $Y_l$ in a non-empty proper subset.
Show that this intersection contains  precisely  $2^{k-l-2}$ points.
Consider distinct $p,q\in Y_l$ and $Y\in {\mathcal O}$ which contains $p$ and does not contain $q$.
We take $\alpha\in G$ sending $p$ to $q$. 
It transposes $Y$ and $O\setminus Y$ (since $Y$ contains $p$ and does not contain $q$). 
Also, $\alpha$ preserves $Y_1,Y'_2,\dots, Y'_l$ (since each of them contains both $p,q$)
and, consequently, $\alpha$ preserves $Y_l$. 
This means that $\alpha$ transposes $Y_l\cap Y$ and $Y_l\setminus Y$. 
Therefore, these subsets contain the same number of elements. 
Since $|Y_l|=2^{k-l-1}$, we obtain that $|Y_l\cap Y|=2^{k-l-2}$.

Using $Y_l\cap Y$ and $Y_l\setminus Y$, we establish that $G_l$ contains an involution $\beta_l$ as in the second step.

There is $Y'_{l+1}\in {\mathcal O}$ such that $\beta_l$ transposes $Y'_{l+1}$ and $O\setminus Y'_{l+1}$. 
If $Y_l$ is contained in $Y'_{l+1}$ or $O\setminus Y'_{l+1}$, then $\beta_l$ preserves each of these subsets
(since it preserves $Y_l$) which is impossible. 
Therefore, $Y'_{l+1}$ intersects $Y_l$ in a non-empty proper subsets and, consequently,
$$Y_{l+1}=Y_l\cap Y'_{l+1}$$
consists of $2^{k-l-2}$ points.
Let $G_{l+1}$ be the subgroup of elements of $G_l$ preserving $Y_{l+1}$.
As in the second step, we obtain that $G_l$ is generated by $G_{l+1}$ and $\beta_l$. 

So, we can construct the required $k-1$ involutions recursively.
\end{proof}

Every non-identity $\alpha \in G$ is an involution. 
Since there is $Y\in {\mathcal O}$ such that $\alpha$ transposes $Y,O\setminus Y$, 
it  is the composition of $2^{k-2}$ mutually commuting transpositions.

Let $Z$ be a $(2^{k-1}-1)$-element subset of $O$. 
We need to show that the design ${\mathcal D}_Z$ is isomorphic to the design of points and hyperplane complements of ${\rm PG}(k-2,2)$ 
(see Section 3).
We take any $p\in Z$ and suppose that $p'$ is the unique point of $O$ which does not belong to $Z$. 
Consider $\varepsilon_p\in G$ transposing $p$ and $p'$. 
We have
$$\varepsilon_p=(p,p')(q_1,t_1)\dots(q_{2^{k-2}-1},t_{2^{k-2}-1}),$$
where $\{p,p',t_1,q_1,\dots,q_{2^{k-2}-1},t_{2^{k-2}-1}\}=O$.

This automorphism preserves a block $Y$ of ${\mathcal D}_Z$ or sends it to $O\setminus Y$.
If this block contains $p$, then $\varepsilon_p(Y)=O\setminus Y$
(since $p'\in O\setminus Y$).
This means that every block of ${\mathcal D}_Z$ containing $p$ intersects each $\{q_i,t_i\}$ precisely in one point. 
Therefore, for every $i\in \{1,\dots, 2^{k-2}-1\}$ there is no block of ${\mathcal D}_Z$
containing $p,q_i,t_i$, in other words, $\{p,q_i,t_i\}$ is a line of ${\mathcal D}_Z$.
%So, $p$ is connected by a line with all other points of ${\mathcal D}_Z$. 
Since $p$ is arbitrary taken, we obtain that any two distinct points of ${\mathcal D}_Z$ are collinear.
By Proposition \ref{prop-ps}, ${\mathcal D}_Z$ is isomorphic to the design of points and hyperplane complements of ${\rm PG}(k-2,2)$.
Then $\varepsilon_p$ coincides with the automorphism $\alpha_p$ considered in Section 3 and $G=C^{k-1}_2(O)$.

\end{document}